\documentclass[11pt, reqno]{amsart}
\usepackage{amsmath, amsthm, amssymb, amsfonts}
\usepackage[colorlinks=true, citecolor=blue, linkcolor=blue]{hyperref}
\usepackage[margin=1in]{geometry}

\newtheorem{theorem}{Theorem}[section]
\newtheorem{corollary}[theorem]{Corollary}
\newtheorem{lemma}[theorem]{Lemma}

\theoremstyle{definition}

\newtheorem{remark}[theorem]{Remark}

\title{A Simple Counting Argument for Dense Linear Hypergraphs}

\author{Lior Gishboliner}
\address{Department of Mathematics, University of Toronto, Toronto, ON, Canada}
\email{lior.gishboliner@utoronto.ca} 

\author{J\'ozsef Solymosi}
\address{Department of Mathematics, University of British Columbia, Vancouver, BC, Canada \and \'Obuda University, Budapest, Hungary}
\email{solymosi@math.ubc.ca}
\thanks{Research supported in part by an NSERC Discovery Grant.}

\subjclass[2020]{Primary 05C65; Secondary 05D05}
\keywords{Linear hypergraphs, Brown-Erd\H{o}s-S\'os problem, local averaging, finite density threshold.}

\dedicatory{\textnormal{\textit{Dedicated to the memory of Vera T. S\'os}}}

\begin{document}

\begin{abstract}
In connection to the Brown-Erd\H{o}s-S\'os conjecture, we give a short local averaging proof of a density theorem for linear uniform hypergraphs. Let $r \ge 3$, $k \ge 3$, and suppose that $n \ge (r-2)(k-2)+1$. If $H$ is a linear $r$-uniform hypergraph on $n$ vertices and 
\[|E(H)| \geq \frac{k-2}{r^2((r-2)(k-2)+1)}n^2 + \frac{n}{r},\]
then $H$ contains $k$ edges spanning at most $(r-2)k+3$ vertices. In the standard linear-density normalization, this gives the asymptotic density threshold $c \geq \frac{r-1}{r} \cdot \frac{k-2}{(r-2)(k-2)+1} + o(1)$. 
In particular, this yields a simple proof of the large-uniformity form of the Brown-Erd\H{o}s-S\'os theorem, due to Keevash and Long.
In the case of triple systems, our bound becomes 
$c \geq \frac{2(k-2)}{3(k-1)} + o(1)$, improving upon a bound of $\frac{4}{5}$ due to Santos and Tyomkyn.

\end{abstract}

\maketitle

\section{Introduction}

An $r$-uniform hypergraph is \textit{linear} if any two distinct edges intersect in at most one vertex. For integers $s > k \ge 2$, an \textit{$(s, k)$-configuration} is a collection of $k$ edges spanning at most $s$ vertices. The celebrated Brown-Erd\H{o}s-S\'os problem \cite{BROWN1973_1, BROWN1973_2} asks for the minimum intermediate edge density required to force the existence of such localized, dense configurations in uniform hypergraphs. 

For a linear $r$-uniform hypergraph $H$ on $n$ vertices, the natural quadratic scale for the edge count is normalized by the maximum possible number of linear edges. We write
\begin{equation}\label{eq:density_def}
|E(H)| = c \frac{\binom{n}{2}}{\binom{r}{2}}
\end{equation}
and refer to $c \in [0,1]$ as the \textit{linear density} of $H$.

Recently, Santos and Tyomkyn \cite{SANTOS2025} proved that for every fixed $k \ge 4$, any sufficiently large linear triple system with linear density $c > 4/5$ contains a $(k+3, k)$-configuration. Their calculations also gave an asymptotic density threshold of $4/7$ for $k=4$. According to the Brown-Erd\H{o}s-S\'os conjecture $c\rightarrow 0$ as $n$ goes to infinity. The conjecture is widely open, the only known case is $k=3$, by the famous result of Ruzsa and Szemer\'edi \cite{RuzsaSzemeredi1978}. So, it is a rigtful question asking what is the least $c$ where the $(k+3, k)$-configuration is guaranteed. For some progress on approximate versions of this problem, see \cite{CGLS,JANZER2025,Shapira_Tyomkyn 2}.

In this paper, we present a counting argument that yields a sharper threshold of 
\[\frac{2(k-2)}{3(k-1)} + o(1)\]
for triple systems. More importantly, this bound is derived not as an asymptotic limit, but as a direct consequence of an explicit, finite theorem valid across all admissible parameters $r, k,$ and $n$.

Our main result is the following finite structural theorem.

\begin{theorem}\label{thm:main}
Let $r \ge 3$ and $k \ge 3$, and suppose $n \ge (r-2)(k-2)+1$. Let $H$ be a linear $r$-uniform hypergraph on $n$ vertices. If
\begin{equation}\label{eq:main_bound}
|E(H)| \geq \frac{k-2}{r^2((r-2)(k-2)+1)}n^2 + \frac{n}{r},
\end{equation}
then $H$ contains an $((r-2)k+3, k)$-configuration.
\end{theorem}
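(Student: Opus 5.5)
The plan is a local averaging (double counting) argument around a single vertex, followed by convexity.

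\emph{Target configuration.} Fix a vertex $v$ and two edges $e_1,e_2$ through $v$. They span $2r-1$ vertices. Suppose $f_1,\dots,f_{k-2}$ are edges not containing $v$, each meeting both $e_1\setminus\{v\}$ and $e_2\setminus\{v\}$. Each $f_j$ then contributes at most $r-2$ new vertices, so the $k$ edges span at most
\[
2r-1+(k-2)(r-2)=(r-2)k+3
\]
vertices, which is an $((r-2)k+3,k)$-configuration. Call such an $f$ a \emph{crossing edge} for the pair $\{e_1,e_2\}$ at $v$. Assuming $H$ contains no such configuration, every pair of edges at every vertex has at most $k-3$ crossing edges. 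The goal is to contradict \eqref{eq:main_bound}.

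\emph{Local counting.} Rather than counting pairs globally, I would fix $v$ and one edge $e_1\ni v$, and look at the $d_v-1$ other edges $e_2$ through $v$. These edges are pairwise disjoint outside $v$, by linearity. Consider the edges $f$ through a vertex $u\in e_1\setminus\{v\}$ with $f\neq e_1$; there are $\sum_{u\in e_1\setminus\{v\}}(d_u-1)$ of them. Each such $f$ has $r-1$ further vertices. By linearity, at most one of them lies in each $e_2$ (and none in $e_1$), and $v\notin f$.
\begin{itemize}
\item The hits of these $r-1$ vertices on star edges are capped in total by $(k-3)(d_v-1)$, since each $e_2$ receives at most $k-3$ crossing edges.
\item The misses land in the set $V\setminus\bigcup_{e\ni v}e$, which has size $n-1-(r-1)d_v$.
\item Linearity also forces the edges $f$ through a fixed $u$ to be disjoint outside $u$, so the vertices they hit are essentially distinct.
\end{itemize}
Combining these, the number of vertices covered satisfies roughly
\[
(r-1)\sum_{u\in e_1\setminus\{v\}}(d_u-1)\;\lesssim\;(r-2)\Big[(k-3)(d_v-1)+n-(r-1)d_v\Big]+\ldots
\]
I would balance these terms so that the factor $(r-2)(k-2)+1$ appears. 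Heuristically, a vertex outside the star absorbs one unit per edge at $u$, while each star edge can absorb up to $k-3$ extra units. This yields a local inequality of the form
\[
\sum_{u\in e_1}d_u\;\le\;\alpha\, d_v+\beta n+O(r),
\]
where $\alpha$ and $\beta$ depend on $r$ and $k$.

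\emph{Averaging and expected difficulty.} Summing the local inequality over all incident pairs $(v,e_1)$ turns the left side into $\sum_e\big(\sum_{u\in e}d_u\big)\cdot(\text{something})=\sum_u d_u^2\cdot(\ldots)$. The right side is linear in $\sum_u d_u^2$ and in $n\sum_u d_u=rn|E|$. Cauchy--Schwarz gives $\sum_u d_u^2\ge (r|E|)^2/n$, and comparing the coefficients yields
\[
|E|\le \frac{k-2}{r^2((r-2)(k-2)+1)}n^2+\frac nr,
\]
with the linear term coming from the $-1$'s in $d_u-1$. I expect the main obstacle to be getting the local inequality with exactly the right constant, so that the coefficient of $\sum_u d_u^2$ on the two sides combines to $(r-2)(k-2)+1$. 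In particular, one has to decide carefully how many of the $r-1$ non-$u$ vertices of each $f$ are charged to star edges versus outside vertices. The hypothesis $n\ge (r-2)(k-2)+1$ should be exactly what makes the right-hand side of the local bound nonnegative, so that the averaging step is valid. If the direct version loses constants, I would first try weighting each vertex $u$ by $d_u$ before summing.
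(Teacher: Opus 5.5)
\textbf{There is a genuine gap, and it lies in your choice of configuration.} The unfinished constant-balancing is a secondary issue.

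\emph{Your configuration cannot occur for large $k$.} Every crossing edge $f$ for $\{e_1,e_2\}$ at $v$ meets $e_1\setminus\{v\}$ in exactly one vertex $u$ and $e_2\setminus\{v\}$ in exactly one vertex $w$. By linearity, the pair $\{u,w\}$ determines $f$. So any pair $\{e_1,e_2\}$ has at most $(r-1)^2$ crossing edges. For $k-2>(r-1)^2$ (already for $r=3$, $k\ge 7$), your configuration occurs in no linear $r$-graph at all.

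\emph{The theorem's hypothesis is still met in that range.} Steiner triple systems (for large admissible $n$) have $n(n-1)/6>\frac{k-2}{9(k-1)}n^2+\frac n3$ edges. So your assumption ``at most $k-3$ crossing edges'' is vacuous there. No local inequality followed by averaging can produce a contradiction; such an argument would rule out Steiner triple systems.

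\emph{The defect also appears inside your count.} A vertex outside the star of $v$ can lie on one edge through each $u\in e_1\setminus\{v\}$. It therefore absorbs up to $r-1$ units, not one, and nothing you forbid prevents this. The count then returns essentially only the trivial estimate coming from linearity. The factor $(r-2)$ and the constants in your local inequality are never actually derived.

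\textbf{The missing idea is to let the $k$ edges take a much more flexible shape.}

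\emph{The paper's reduction.} The paper picks an edge $e_0$ with $\sum_{x\in e_0}d(x)\ge r^2|E(H)|/n$, using Cauchy--Schwarz plus pigeonhole. It then forms the $(r-1)$-uniform hypergraph $G$ on $V(H)\setminus e_0$ whose edges are $f\setminus e_0$ for $f\neq e_0$ meeting $e_0$. Linearity makes $f\mapsto f\setminus e_0$ injective, so $e(G)\ge \frac{k-2}{(r-2)(k-2)+1}\,n>\frac{k-2}{(r-2)(k-2)+1}\,v(G)$.

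\emph{Where your configuration sits.} With $e_0=e_1$, your configuration is, inside $G$, a single edge $e_2\setminus\{v\}$ together with $k-2$ edges meeting it. This is only one way to place $k-1$ edges of $G$ on at most $(r-2)(k-1)+1$ vertices. A loose path, or any connected piece with $k-1$ edges, also qualifies, as do unions of sufficiently dense components. Any such structure, together with $e_0$, gives $k$ edges on at most $(r-2)k+3$ vertices.

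\emph{How the paper finds one.} The Component Lemma extracts such a structure from the density bound alone. Order the components of $G$ by edge density. Then one of three things happens:
\begin{itemize}
\item some component has $k-1$ edges, and you take them in connected order;
\item the densest components plus a connected piece of the next one fit within the vertex budget;
\item the prefix is sparse and every remaining component is tree-like with at most $k-2$ edges, which forces $e(G)\le\frac{k-2}{(r-2)(k-2)+1}\,v(G)$, a contradiction.
\end{itemize}
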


Expressed in terms of the linear density defined in \eqref{eq:density_def}, Theorem \ref{thm:main} yields the following immediate corollary.

\begin{corollary}\label{cor:density_general}
Let $r \ge 3$, $k \ge 3$, and $n \ge (r-2)(k-2)+1$. Let $H$ be a linear $r$-uniform hypergraph on $n$ vertices with linear density $c$. If
\[c \geq \frac{r-1}{r} \cdot \frac{k-2}{(r-2)(k-2)+1} \cdot \frac{n}{n-1} + \frac{r-1}{n-1},\]
then $H$ contains an $((r-2)k+3, k)$-configuration. In particular, the asymptotic density threshold \nolinebreak is 
\[\frac{r-1}{r} \cdot \frac{k-2}{(r-2)(k-2)+1} + O_r\left(\frac{1}{n}\right),\]
with the error term uniform in $k$ across the admissible range.
\end{corollary}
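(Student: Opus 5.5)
The plan is to derive the corollary directly from Theorem~\ref{thm:main} by rewriting the hypothesis on $c$ as a hypothesis on $|E(H)|$. First, by the normalization \eqref{eq:density_def},
\[
|E(H)| = c\,\frac{\binom{n}{2}}{\binom{r}{2}} = c\,\frac{n(n-1)}{r(r-1)}.
\]
Write $A = \frac{k-2}{(r-2)(k-2)+1}$. Multiplying the assumed lower bound on $c$ by the positive factor $\frac{n(n-1)}{r(r-1)}$ handles the two terms separately:
\[
\frac{r-1}{r}\cdot A\cdot\frac{n}{n-1}\cdot\frac{n(n-1)}{r(r-1)} = \frac{A}{r^2}\,n^2,
\qquad
\frac{r-1}{n-1}\cdot\frac{n(n-1)}{r(r-1)} = \frac{n}{r}.
\]
Hence $|E(H)| \ge \frac{k-2}{r^2((r-2)(k-2)+1)}n^2 + \frac{n}{r}$, which is exactly \eqref{eq:main_bound}. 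The parameter conditions $r\ge 3$, $k\ge 3$, $n\ge (r-2)(k-2)+1$ are identical in the two statements, so Theorem~\ref{thm:main} gives the $((r-2)k+3,k)$-configuration.

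For the asymptotic statement, write $\frac{n}{n-1} = 1 + \frac{1}{n-1}$. The threshold then equals
\[
\frac{r-1}{r}A \;+\; \frac{r-1}{r}\cdot\frac{A}{n-1} \;+\; \frac{r-1}{n-1}.
\]
The key observation for uniformity in $k$ is that $A < \frac{1}{r-2}$ for every $k \ge 3$, since $(k-2) < \frac{(r-2)(k-2)+1}{r-2}$. Therefore the two error terms together are at most
\[
\frac{r-1}{n-1}\Big(\frac{1}{r(r-2)} + 1\Big) = O_r\!\left(\frac1n\right),
\]
with an implied constant depending only on $r$.

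There is no real obstacle here; the only point needing care is this uniform bound $A<1/(r-2)$, which is what makes the $O_r(1/n)$ term independent of $k$ across the admissible range.
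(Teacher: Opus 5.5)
Your proof is correct and follows the paper's argument: you convert the density hypothesis into the edge bound \eqref{eq:main_bound} via $|E(H)| = c\,n(n-1)/(r(r-1))$, invoke Theorem~\ref{thm:main}, and then split $\frac{n}{n-1} = 1 + \frac{1}{n-1}$ for the asymptotic claim. The only cosmetic difference is that you bound the error terms using $A < \frac{1}{r-2}$, whereas the paper uses that the core coefficient is at most $1$; both give a $k$-independent $O_r(1/n)$ term.
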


Specializing Corollary \ref{cor:density_general} to the case of triple systems ($r=3$), the vertex span simplifies to $(3-2)k+3 = k+3$, yielding an explicit threshold for small configurations.

\begin{corollary}\label{cor:triples}
Let $k \ge 3$ and $n \ge k-1$, and let $H$ be a linear triple system on $n$ vertices. If
\[|E(H)| > \frac{k-2}{9(k-1)}n^2 + \frac{n}{3},\]
then $H$ contains a $(k+3, k)$-configuration. Equivalently, in the linear-density normalization, it is sufficient to assume
\[c > \frac{2(k-2)}{3(k-1)} \cdot \frac{n}{n-1} + \frac{2}{n-1}.\]
Thus, the asymptotic threshold is $\frac{2(k-2)}{3(k-1)} + O(\frac{1}{n})$ uniformly for all $k$. For $k=4$, this yields a threshold of $\frac{4}{9} + O(\frac{1}{n})$, improving upon the $\frac{4}{7}$ bound established in \cite{SANTOS2025}.
\end{corollary}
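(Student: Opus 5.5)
The plan is to derive the statement as a direct specialization of Theorem~\ref{thm:main} to $r=3$, then translate it into the linear-density normalization \eqref{eq:density_def} by elementary algebra. No new combinatorial input should be needed.

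\emph{Step 1: specialize Theorem~\ref{thm:main}.} Setting $r=3$ gives
\[(r-2)(k-2)+1 = k-1, \qquad r^2 = 9, \qquad (r-2)k+3 = k+3.\]
So the hypothesis $n \ge (r-2)(k-2)+1$ becomes $n \ge k-1$, and the bound \eqref{eq:main_bound} becomes
\[|E(H)| \ge \frac{k-2}{9(k-1)}n^2 + \frac{n}{3}.\]
The assumed strict inequality implies this non-strict one, so Theorem~\ref{thm:main} yields a $(k+3,k)$-configuration.

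\emph{Step 2: equivalence with the density form.} By \eqref{eq:density_def} with $r=3$ we have $|E(H)| = c\,n(n-1)/6$. Multiplying the edge inequality by $6/(n(n-1))$, which is positive since $n \ge 2$, shows it is equivalent to
\[c > \frac{6(k-2)}{9(k-1)}\cdot\frac{n}{n-1} + \frac{2}{n-1} = \frac{2(k-2)}{3(k-1)}\cdot\frac{n}{n-1} + \frac{2}{n-1}.\]
This is the same conversion that underlies Corollary~\ref{cor:density_general} with $r=3$, except that the inequalities are kept strict. The case $n=1$ is degenerate: $n \ge k-1 \ge 2$ anyway, since $k \ge 3$.

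\emph{Step 3: the asymptotic threshold.} Write $\frac{n}{n-1} = 1 + \frac{1}{n-1}$. The excess of the sufficient density over $\frac{2(k-2)}{3(k-1)}$ is then
\[\frac{2(k-2)}{3(k-1)}\cdot\frac{1}{n-1} + \frac{2}{n-1}.\]
Since $\frac{2(k-2)}{3(k-1)} < \frac{2}{3}$ for every $k$, this excess is at most $\frac{8/3}{n-1} = O(1/n)$, uniformly in $k$.

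\emph{Step 4: the case $k=4$.} Here $\frac{2\cdot 2}{3\cdot 3} = \frac{4}{9}$, which is smaller than the $\frac{4}{7}$ of \cite{SANTOS2025}.

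The only step needing care is Step 2: tracking the $n/(n-1)$ factor and the strict versus non-strict inequality. Otherwise the statement is a routine consequence of Theorem~\ref{thm:main}.
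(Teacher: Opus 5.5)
Your route is the paper's: specialize Theorem~\ref{thm:main} (equivalently Corollary~\ref{cor:density_general}) to $r=3$ and rescale by $6/(n(n-1))$. Your algebra in Steps 2--4 is correct.

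There is one caveat, which you inherit from the paper rather than introduce. In Step 1 you weaken the strict hypothesis to the non-strict one and cite Theorem~\ref{thm:main}, but that statement is false at the boundary $n=(r-2)(k-2)+1$.

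\emph{Counterexample.} Take the Fano plane with $r=3$, $k=8$, $n=7$. It is linear and has exactly $7=\frac{6}{9\cdot 7}\cdot 49+\frac{7}{3}$ edges, so it meets the non-strict bound. Yet it has only $7$ edges, so it cannot contain an $(11,8)$-configuration.

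\emph{Where the paper's proof breaks.} In the proof of Theorem~\ref{thm:main}, the hypothesis there only yields $e(G)\ge \frac{k-2}{k-1}\,n=k-2$. For the Fano plane, $G=K_4$ has exactly $6=k-2$ edges. So the claimed $e(G)\ge k-1$ fails, and Lemma~\ref{lem:component} cannot be applied.

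\emph{Fix.} The strict inequality in Corollary~\ref{cor:triples} is exactly what repairs this, so keep it rather than discard it. With $m>\frac{k-2}{9(k-1)}n^2+\frac{n}{3}$, the same averaging gives $e(G)\ge \frac{9m}{n}-3>\frac{k-2}{k-1}\,n\ge k-2$. Hence $e(G)\ge k-1$ and $e(G)>\frac{k-2}{k-1}\,v(G)$, and Lemma~\ref{lem:component} with $s=2$, $l=k-1$ finishes as in the paper. For $n\ge k$ the non-strict version is already valid, since $\frac{k-2}{k-1}\,n>k-2$; only $n=k-1$ needs the strictness.
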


Finally, because the explicit density threshold in Corollary \ref{cor:density_general} is bounded above by a function of $r$ that decays independently of $k$, our finite theorem provides a short, elementary proof of the large-uniformity Brown-Erd\H{o}s-S\'os theorem proved via probabilistic absorption by Keevash and Long \cite{KEEVASH2020}.

\begin{corollary}\label{cor:keevash_long}
For every $c_0 > 0$, there exists an integer $r_0 = r_0(c_0)$ such that the following holds. For every $r \ge r_0$ and every $k \ge 3$, there exists an $n_0 = n_0(r, k, c_0)$ such that every linear $r$-uniform hypergraph on $n \ge n_0$ vertices with linear density at least $c_0$ contains an $((r-2)k+3, k)$-configuration. 
\end{corollary}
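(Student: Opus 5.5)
The plan is to deduce the statement directly from Corollary~\ref{cor:density_general}. The key observation is that the main term of the threshold there can be bounded independently of $k$. Write
\[
f(r,k) = \frac{r-1}{r}\cdot\frac{k-2}{(r-2)(k-2)+1}.
\]
Since $\frac{r-1}{r}<1$ and $(r-2)(k-2)+1 > (r-2)(k-2)$, we get
\[
f(r,k) < \frac{k-2}{(r-2)(k-2)} = \frac{1}{r-2}
\]
for every $k\ge 3$. So the main term decays in $r$ uniformly in $k$. This uniformity in $k$ is the only conceptual point; everything else is absorbing the lower-order terms.

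Given $c_0>0$, I would choose $r_0$ so that $\frac{1}{r_0-2}\le \frac{c_0}{2}$, for instance $r_0 = \lceil 2/c_0\rceil + 3$. Then for every $r\ge r_0$ and every $k\ge 3$ we have $f(r,k) < c_0/2$. If $c_0>1$ the statement is vacuous, because linear density never exceeds $1$, so we may assume $c_0\le 1$.

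Next, fix $r\ge r_0$ and $k\ge 3$, and choose $n_0$ so that the full finite hypothesis of Corollary~\ref{cor:density_general} holds whenever $c\ge c_0$ and $n\ge n_0$. We need three things.
\begin{itemize}
\item The admissibility condition $n\ge (r-2)(k-2)+1$.
\item $\frac{n}{n-1}\le \frac32$, which holds for $n\ge 3$. Then the first term is at most $\frac32\cdot\frac{c_0}{2} = \frac{3c_0}{4}$.
\item $\frac{r-1}{n-1}\le \frac{c_0}{4}$, which holds for $n\ge 4(r-1)/c_0+1$.
\end{itemize}
So I would take
\[
n_0 = \max\Bigl\{(r-2)(k-2)+1,\ 3,\ \bigl\lceil 4(r-1)/c_0\bigr\rceil+1\Bigr\}.
\]
For $n\ge n_0$ this gives
\[
\frac{r-1}{r}\cdot\frac{k-2}{(r-2)(k-2)+1}\cdot\frac{n}{n-1} + \frac{r-1}{n-1} \le \frac{3c_0}{4}+\frac{c_0}{4} = c_0 \le c.
\]
Corollary~\ref{cor:density_general} then yields an $((r-2)k+3,k)$-configuration.

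I do not expect any genuine obstacle here. The only care needed is to keep the order of quantifiers straight. The integer $r_0$ depends only on $c_0$, because the bound $f(r,k)<1/(r-2)$ is uniform in $k$. The integer $n_0$ is allowed to depend on $r$, $k$ and $c_0$; it must, because the admissibility condition $n\ge (r-2)(k-2)+1$ and the error term $(r-1)/(n-1)$ both grow with $r$ and $k$.
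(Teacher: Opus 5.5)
Your proof is correct and follows essentially the same route as the paper. Both bound the main term of Corollary~\ref{cor:density_general} uniformly in $k$ via $\frac{k-2}{(r-2)(k-2)+1} < \frac{1}{r-2}$; you additionally drop the factor $\frac{r-1}{r}$, whereas the paper keeps it and uses $\frac{r-1}{r(r-2)}$. Both then choose $r_0$ from this $k$-independent bound and take $n_0$ large to absorb the lower-order terms. Your explicit $n_0$ is slightly more careful than the paper's ``sufficiently large'' step, since it also records the admissibility condition $n \ge (r-2)(k-2)+1$, which the paper leaves implicit.
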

\noindent
In particular, we get a short proof of the Ramsey-type results of \cite{Shapira_Tyomkyn 1}, which follow from Corollary \nolinebreak \ref{cor:keevash_long}.

\section{The Component Lemma}

The sole non-averaging ingredient in our argument is an elementary structural bound on the connected components of a hypergraph. 

\begin{lemma}[Component Lemma]\label{lem:component}
Let $s \ge 2$ and $l \ge 2$. Let $G$ be an $s$-uniform hypergraph such that $e(G) \ge l$ and
\begin{equation}\label{eq:lemma_hyp}
e(G) > \frac{l-1}{(s-1)(l-1)+1}v(G).
\end{equation}
Then there exists a subset of vertices $U \subseteq V(G)$ such that $e_G(U) \ge l$ and $|U| \le (s-1)l+1$.
\end{lemma}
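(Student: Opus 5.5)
The plan is to pass to a connected component and then grow a connected sub-hypergraph one edge at a time, so that each new edge adds at most $s-1$ new vertices.

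\emph{Reduction to a component.} Write $\alpha = \frac{l-1}{(s-1)(l-1)+1}$. We may delete isolated vertices, since this only decreases $v(G)$ and so keeps \eqref{eq:lemma_hyp}. Both $e(G)$ and $v(G)$ are additive over connected components. Hence some component $C$ satisfies $e(C) > \alpha\, v(C)$.

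\emph{Case $e(C)\ge l$.} Since $C$ is connected, we can order its edges $f_1, f_2, \dots$ so that each $f_j$ with $j\ge 2$ meets $f_1\cup\dots\cup f_{j-1}$. For instance, use a BFS order on the line graph of $C$. Let $U = f_1\cup \dots\cup f_l$. Then $|U| \le s + (l-1)(s-1) = (s-1)l+1$ and $e_G(U)\ge l$, as required.

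\emph{Case $e(C)\le l-1$.} Here we derive a contradiction. Since $C$ is connected, the same ordering gives $v(C) \ge$ ... but that only bounds $v(C)$ from above, which is the wrong direction. Instead use the lower bound $v(C)\ge s$ together with the hypothesis, which gives $e(C) > \alpha v(C)$. The naive bound $v(C) \ge s$ is too weak, so a different argument is needed.

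\emph{Avoiding the small-component case.} Components with few edges are too sparse to absorb the excess, but to exploit this we have to compare against $\alpha$ carefully. Suppose every component has at most $l-1$ edges. Take a component with $m\le l-1$ edges. I would like to show $m \le \alpha\, v(C)$, yet a single edge gives $m=1$ and $v=s$, with $\alpha s<1$ possible. So the component approach alone fails, and this is the \textbf{main obstacle}.

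\emph{Reusing edges globally.} Since $e(G)\ge l$, the fix is to allow $U$ to be a union over several components, or overlapping pieces. Then the right argument is greedy. Process the components, and inside each one add edges in connected order. Every edge adds at most $s-1$ new vertices, except the first edge of each component, which adds $s$. We stop after $l$ edges. The resulting $U$ has $|U|\le (s-1)l + t$, where $t$ is the number of components used.

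To minimize $t$, take the components in decreasing order of the ratio $e(C)/v(C)$, or of their edge counts. Then use averaging to show that at most one component is needed. The bound to establish is that the densest component either has at least $l$ edges, or has $e(C)=m$ and $v(C)\le (s-1)m+1$ with $m/((s-1)m+1)\le\alpha$, because that function increases in $m$. The latter contradicts density above $\alpha$ in every component.

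So the key inequality is the following. A connected component with $m$ edges has $v\ge$ ... not bounded below by $(s-1)m+1$ in general: cycles make $v$ smaller. The correct approach is therefore to pick, inside the densest component, a minimal connected subhypergraph. If it has $m<l$ edges and $v\le(s-1)m+1$ vertices, its ratio is at most $\alpha$. The densest component has ratio above $\alpha$, so it must contain at least $l$ edges, and then the BFS prefix argument above concludes.
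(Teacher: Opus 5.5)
There is a genuine gap. Your final step, showing that the densest component must contain $l$ edges, uses a reversed inequality, and its conclusion is false. From $v\le (s-1)m+1$ you only get $m/v\ge m/((s-1)m+1)$, which is a \emph{lower} bound on the ratio. Nothing forces a component of ratio above $\alpha$ to have $l$ edges.

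Here is a counterexample. Take $s=l=3$ (so $\alpha=\frac{2}{5}$) and $G=\{\{1,2,3\},\{1,2,4\},\{5,6,7\}\}$. Then $e(G)=3\ge l$ and $3>\frac{2}{5}\cdot 7$, yet every component has fewer than $3$ edges. So in your case ``$e(C)\le l-1$'' there is no contradiction to derive, and any valid $U$ must meet two components (here $U=V(G)$, with $|U|=7$). Your fallback bound $|U|\le (s-1)l+t$ cannot certify this either: with $t=2$ it gives $8>7$. The same happens for two disjoint copies of $\{1,2,3\},\{1,2,4\}$, where the answer is one whole copy plus one edge of the other.

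Your diagnosis of the obstacle is also off. Since $s(l-1)-\bigl((s-1)(l-1)+1\bigr)=l-2\ge 0$, we always have $\alpha s\ge 1$. So single edges have ratio at most $\alpha$, and more generally so does any component with $e(C)\le l-1$ and $v(C)\ge (s-1)e(C)+1$. The troublesome components are those with $v(C)\le (s-1)e(C)$: their ratio is at least $1/(s-1)>\alpha$, but they may have fewer than $l$ edges.

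The missing idea is that such components can be taken \emph{whole} at a cost of at most $s-1$ vertices per edge, with no additive $+1$ per component. After disposing, as you do, of the case that some component has at least $l$ edges, the paper proceeds as follows.
\begin{itemize}
\item Order the components by non-increasing ratio $e(C_i)/|C_i|$, and let $j$ be minimal with $\sum_{i\le j}e(C_i)\ge l$.
\item Take $C_1,\dots,C_{j-1}$ whole, giving $q\le l-1$ edges on $p$ vertices.
\item Add $l-q$ edges of $C_j$ in connected order. The span is at most $p+(s-1)(l-q)+1$, which is at most $(s-1)l+1$ exactly when $p\le (s-1)q$.
\end{itemize}
Suppose instead $p\ge (s-1)q+1$. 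Then the prefix has ratio at most $q/((s-1)q+1)\le\alpha$. Moreover, $C_j$ must have ratio below $1/(s-1)$: otherwise, by the ordering, every earlier component would satisfy $|C_i|\le (s-1)e(C_i)$, giving $p\le (s-1)q$. By the ordering and integrality, every $C_i$ with $i\ge j$ then satisfies $|C_i|\ge (s-1)e(C_i)+1$ and $e(C_i)\le l-1$, so it has ratio at most $\alpha$. Summing over the prefix and the tail gives $e(G)\le \alpha v(G)$, contradicting the hypothesis.
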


\begin{proof}
We begin with a standard preliminary observation regarding connected hypergraphs: if $C$ is a connected $s$-uniform hypergraph containing at least $q$ edges, we may select $q$ of its edges $f_1, \dots, f_q$ in a \textit{connected order}. That is, $f_1$ is chosen arbitrarily, and for every $2 \le i \le q$, the edge $f_i$ satisfies 
\[f_i \cap \left( \bigcup_{a=1}^{i-1} f_a \right) \neq \emptyset.\]
Because each successive edge adds at most $s-1$ new vertices to the union, the total vertex span of these $q$ edges is bounded above by
\begin{equation}\label{eq:connected_span}
\left| \bigcup_{i=1}^q f_i \right| \le s + (q-1)(s-1) = (s-1)q + 1.
\end{equation}

Now, let $C_1, \dots, C_m$ be the connected components of $G$, ordered by non-increasing edge density:
\begin{equation}\label{eq:ordering}
\frac{e(C_1)}{|C_1|} \ge \frac{e(C_2)}{|C_2|} \ge \dots \ge \frac{e(C_m)}{|C_m|}.
\end{equation}
(Isolated vertices, if present, are treated as individual components with zero edges). 

If any single component $C_i$ contains at least $l$ edges, we may apply the observation in \eqref{eq:connected_span} to a connected $l$-edge subgraph within $C_i$. This immediately yields a subset of vertices $U$ with $e_G(U) \ge l$ and $|U| \le (s-1)l+1$, satisfying the lemma. We may therefore assume for the remainder of the proof that 
\begin{equation}\label{eq:small_comps}
e(C_i) \le l-1 \quad \text{for all } 1 \le i \le m.
\end{equation}

Because the total edge count satisfies $e(G) \ge l$, there exists a minimal integer $j \ge 1$ such that the prefix sum of the component edges reaches or exceeds $l$:
\[\sum_{i=1}^j e(C_i) \ge l.\]
Define the prefix edge count $q$ and prefix vertex count $p$ strictly prior to index $j$ as
\[q := \sum_{i=1}^{j-1} e(C_i), \qquad p := \sum_{i=1}^{j-1} |C_i|.\]
By the minimality of $j$, we have $q \le l-1$. We now construct our candidate edge set by taking all $q$ edges from the components $C_1, \dots, C_{j-1}$, alongside a collection of precisely $l-q$ edges selected in a connected order from component $C_j$. Together, these form a set of exactly $l$ edges in $G$. Invoking \eqref{eq:connected_span} on the portion inside $C_j$, the total number of vertices spanned by this configuration is at most
\[p + (s-1)(l-q) + 1.\]
If this quantity is bounded above by $(s-1)l+1$, the proof is complete. Subtracting $(s-1)(l-q)+1$ from both sides, this success condition is algebraically equivalent to the inequality $p \le (s-1)q$. 

We assume for contradiction that the configuration fails, which forces the strict reverse inequality:
\begin{equation}\label{eq:p_large}
p > (s-1)q.
\end{equation}

The real-valued function $\phi(x) = \frac{x}{(s-1)x+1}$ is strictly monotonically increasing across the domain $x \ge 0$. Because $q \le l-1$, applying $\phi$ to $q$ yields
\[\frac{q}{(s-1)q+1} \le \frac{l-1}{(s-1)(l-1)+1}.\]
Furthermore, rewriting \eqref{eq:p_large} as $p \ge (s-1)q+1$ allows us to bound the prefix density:
\begin{equation}\label{eq:prefix_bound}
\frac{q}{p} \le \frac{q}{(s-1)q+1} \le \frac{l-1}{(s-1)(l-1)+1}.
\end{equation}

Note that $|C_j| > (s-1)e(C_j)$. Suppose to the contrary that $|C_j| \le (s-1)e(C_j)$, meaning its local density satisfies $e(C_j)/|C_j| \ge \frac{1}{s-1}$. By the non-increasing density ordering \eqref{eq:ordering}, this would imply that every preceding component shares this lower bound:
\[\frac{e(C_i)}{|C_i|} \ge \frac{1}{s-1} \implies |C_i| \le (s-1)e(C_i) \quad \text{for all } i < j.\]
Summing this inequality over all $i < j$ yields $p \le (s-1)q$, directly contradicting our assumption in \eqref{eq:p_large}. 

Thus, we must have $|C_j| > (s-1)e(C_j)$, which forces $e(C_j)/|C_j| < \frac{1}{s-1}$. Using the component ordering \eqref{eq:ordering} once more, this strict upper bound propagates to all subsequent components:
\[\frac{e(C_i)}{|C_i|} < \frac{1}{s-1} \implies |C_i| > (s-1)e(C_i) \quad \text{for all } i \ge j.\]
Because the sizes and edge counts of hypergraph components are strictly integral, the strict inequality $|C_i| > (s-1)e(C_i)$ tightens to the discrete bound 
\[|C_i| \ge (s-1)e(C_i) + 1 \quad \text{for all } i \ge j.\]
Applying the monotonicity of $\phi(x)$ to the component counts $e(C_i) \le l-1$, we achieve a uniform upper bound on the density of every tail component:
\begin{equation}\label{eq:tail_bound}
\frac{e(C_i)}{|C_i|} \le \frac{e(C_i)}{(s-1)e(C_i)+1} \le \frac{l-1}{(s-1)(l-1)+1} \quad \text{for all } i \ge j.
\end{equation}

We reach our contradiction by reassembling the global edge density of $G$. Partitioning the vertices and edges at index $j$ and applying the fractional bounds \eqref{eq:prefix_bound} and \eqref{eq:tail_bound}, we obtain
\begin{align*}
e(G) &= q + \sum_{i=j}^m e(C_i) \\
&\le \left[ \frac{l-1}{(s-1)(l-1)+1} \right] p + \sum_{i=j}^m \left[ \frac{l-1}{(s-1)(l-1)+1} \right] |C_i| \\
&= \frac{l-1}{(s-1)(l-1)+1} \left( p + \sum_{i=j}^m |C_i| \right) = \frac{l-1}{(s-1)(l-1)+1}v(G).
\end{align*}
This violates the strict density lower bound assumed in \eqref{eq:lemma_hyp}, proving that the initial failure assumption \eqref{eq:p_large} is false. The lemma follows.
\end{proof}

\section{Proof of the Main Results}
\noindent
With Lemma \ref{lem:component} in hand, we proceed to the proof of the main structural theorem.

\begin{proof}[Proof of Theorem \ref{thm:main}]
Let $m = |E(H)|$, and let $d(x)$ denote the degree of vertex $x \in V(H)$. We first claim that there exists a ``heavily intersected" base edge $e_0 = \{x_1, \dots, x_r\} \in E(H)$ whose incident vertex degrees satisfy
\begin{equation}\label{eq:heavy_edge}
\sum_{i=1}^r d(x_i) \ge \frac{r^2 m}{n}.
\end{equation}
To verify this, we evaluate the sum of the vertex degrees over all edges in $H$. Re-indexing the summation over the vertices and applying the standard Cauchy-Schwarz inequality, we have
\[\sum_{e \in E(H)} \sum_{x \in e} d(x) = \sum_{x \in V(H)} d(x)^2 \ge \frac{1}{n} \left( \sum_{x \in V(H)} d(x) \right)^2 = \frac{1}{n}(rm)^2 = \frac{r^2 m^2}{n}.\]
Because this total is distributed across precisely $m$ edges, the Pigeonhole Principle guarantees the existence of at least one edge $e_0$ achieving the average asserted in \eqref{eq:heavy_edge}. Fix this edge $e_0$.

We now define an auxiliary $(r-1)$-uniform hypergraph $G$ with vertex set $V(G) = V(H) \setminus e_0$. The edges of $G$ are constructed via projection: for every edge $f \in E(H) \setminus \{e_0\}$ that intersects $e_0$, the linearity of $H$ guarantees that the intersection consists of exactly one vertex, say $f \cap e_0 = \{x_i\}$. We project $f$ down to the auxiliary edge
\[f' := f \setminus \{x_i\} \in \binom{V(G)}{r-1}.\]

Crucially, because $H$ is linear and $r \ge 3$, no two distinct edges of $H$ collapse to the same auxiliary edge in $G$. Indeed, if two distinct edges $f_1, f_2 \neq e_0$ yielded the same set $S$ outside $e_0$, then $|S| = r-1 \ge 2$. If they intersected $e_0$ at the same vertex $x_i$, they would be identical ($f_1 = S \cup \{x_i\} = f_2$). If they intersected $e_0$ at distinct vertices $x_a \neq x_b$, then $f_1$ and $f_2$ would share the entire set $S$ of size at least $2$, violating the linearity of $H$. 

Hence, the mapping $f \mapsto f'$ is a bijection from the set of edges outside $e_0$ intersecting $e_0$ to the edge set of $G$. Summing the contributions across the vertices of $e_0$, we obtain the exact edge count:
\begin{equation}\label{eq:aux_edges}
e(G) = \sum_{i=1}^r (d(x_i) - 1) = \left( \sum_{i=1}^r d(x_i) \right) - r \ge \frac{r^2 m}{n} - r.
\end{equation}

Define the rational parameter $\alpha := \frac{k-2}{(r-2)(k-2)+1}$. Substituting our bound for $m$ from \eqref{eq:main_bound} into the lower bound \eqref{eq:aux_edges}, we have:
\begin{align*}
e(G) &\ge \frac{r^2}{n} \left[ \frac{k-2}{r^2((r-2)(k-2)+1)}n^2 + \frac{n}{r} \right] - r \\
&= \left[ \frac{k-2}{(r-2)(k-2)+1} \right]n + r - r = \alpha n.
\end{align*}
Thus, $e(G) \geq \alpha n$. Because the auxiliary vertices exclude the base edge, $v(G) \le n - r < n$, immediately yielding the scale-free density lower bound 
\begin{equation}\label{eq:aux_density_met}
e(G) > \alpha v(G).
\end{equation}
The assumption $n \ge (r-2)(k-2)+1$ ensures that $\alpha n \ge k-2$. Bounding $e(G) > k-2$ over the integers strictly forces $e(G) \ge k-1$.

We can now apply Lemma \ref{lem:component} to the auxiliary hypergraph $G$, setting its uniformity to $s = r-1$ and its target edge threshold to $l = k-1$. Notice that the coefficient required by the lemma matches our parameter $\alpha$ exactly:
\[\frac{l-1}{(s-1)(l-1)+1} = \frac{(k-1)-1}{((r-1)-1)((k-1)-1)+1} = \frac{k-2}{(r-2)(k-2)+1} = \alpha.\]
Because $e(G) \ge k-1$ and condition \eqref{eq:aux_density_met} holds, Lemma \ref{lem:component} provides a subset of auxiliary vertices $U \subseteq V(G)$ containing at least $k-1$ edges of $G$, bounded in size by
\[|U| \le (s-1)l + 1 = (r-2)(k-1) + 1.\]

We lift this structure back to the original hypergraph by defining $W := U \cup e_0 \subseteq V(H)$. The $k-1$ auxiliary edges in $G[U]$ correspond to $k-1$ distinct edges of $H$ that intersect $e_0$ and are fully contained within $W$. Adjoining the base edge $e_0$ itself, we have identified a collection of at least $k$ distinct edges of $H$ residing entirely within $W$. Accounting for the base vertices, the total vertex span of this configuration is bounded by
\[|W| \le |U| + |e_0| \le (r-2)(k-1) + 1 + r = (r-2)k + 3.\]
This confirms that $H$ contains an $((r-2)k+3, k)$-configuration, completing the proof.
\end{proof}

\begin{proof}[Proof of Corollaries \ref{cor:density_general} and \ref{cor:triples}]
To establish Corollary \ref{cor:density_general}, we express the absolute edge count as $m = c \frac{n(n-1)}{r(r-1)}$. Expanding the inequality from Theorem \ref{thm:main}, we require
\[c \frac{n(n-1)}{r(r-1)} \geq \frac{k-2}{r^2((r-2)(k-2)+1)}n^2 + \frac{n}{r}.\]
Multiplying both sides by the reciprocal scalar $\frac{r(r-1)}{n(n-1)}$ isolates $c$ directly:
\[c \geq \frac{r-1}{r} \cdot \frac{k-2}{(r-2)(k-2)+1} \cdot \frac{n}{n-1} + \frac{r-1}{n-1}.\]
To verify the asymptotic claim, observe that for all $r \ge 3$ and $k \ge 3$, the core coefficient satisfies $\frac{r-1}{r} \frac{k-2}{(r-2)(k-2)+1} \le 1$. Writing $\frac{n}{n-1} = 1 + \frac{1}{n-1}$, the secondary expansion gets absorbed into the $O_r(1/n)$ error term. Setting $r=3$ reduces the statement to Corollary \ref{cor:triples}.
\end{proof}

\begin{proof}[Proof of Corollary \ref{cor:keevash_long}]
Consider the function $\psi(x) = \frac{x}{(r-2)x+1}$. Because $\psi(x)$ is strictly increasing for $x \ge 0$, evaluating it at the finite integer $x = k-2$ strictly bounds it below its horizontal asymptote:
\[\frac{k-2}{(r-2)(k-2)+1} < \lim_{x \to \infty} \psi(x) = \frac{1}{r-2} \quad \text{for all } k \ge 3.\]
Consequently, the main density threshold from Corollary \ref{cor:density_general} is bounded above by
\[\frac{r-1}{r} \cdot \frac{1}{r-2} = \frac{r-1}{r(r-2)}.\]
Given any target density $c_0 > 0$, choose an integer $r_0$ sufficiently large so that $\frac{r_0-1}{r_0(r_0-2)} < c_0$. Because this expression decreases monotonically toward zero as $r \to \infty$, the strict bound $\frac{r-1}{r(r-2)} < c_0$ holds for all $r \ge r_0$ independently of $k$. Finally, for any fixed $r \ge r_0$ and $k \ge 3$, choosing $n_0$ sufficiently large overcomes the $O_r(\frac{1}{n})$ finite terms, ensuring that any linear $r$-uniform hypergraph on $n \ge n_0$ vertices with density at least $c_0$ triggers the configuration.
\end{proof}

\section{Acknowledgements}
\noindent
Each of the authors is supported by an NSERC Discovery Grant.

\end{document}